\def\bbbr{{\rm I\!R}} 
\newtheorem{lemma}{Lemma}
\newtheorem{theorem}{Theorem}
\newtheorem{claim}{Claim}
\newtheorem{cor}{Corollary}
\newcommand{\bn}{\begin{displaymath}}            
\newcommand{\en}{\end{displaymath}}
\newcommand{\bq}{\begin{equation}}               
\newcommand{\eq}{\end{equation}}
\newenvironment{proof}{
\par
\noindent {\bf Proof.}\rm}%
{\mbox{}\hfill\rule{0.5em}{0.809em}\par}
\newcommand{\qed}{\hfill\rule{0.5em}{0.809em}}
\begin{document}

\title{Circular flow number of highly edge connected  signed graphs}
\author{Xuding Zhu \thanks{Department of Mathematics, Zhejiang Normal University, China. Grant numbers:
  NSF11171310 and ZJNSF  Z6110786.  Email: xudingzhu@gmail.com. }
       }

\date{2012.11.11}

\maketitle

\begin{abstract}
This paper proves that for any positive integer $k$, every essentially $(2k+1)$-unbalanced
$(12k-1)$-edge connected signed graph has circular flow number at most $2+\frac 1k$.
\end{abstract}

\section{Introduction}

Suppose $G$ is a graph. A {\em circulation} in $G$ is  an orientation $D$ of $G$ together with a mapping $f: E(G) \to \bbbr$.
A circulation in $G$ can be denoted by a pair $(D, f)$. However, for simplicity, we usually denote it by $f$, and
 call $D$ the {\em orientation   associated with   $f$}.
The {\em boundary} of a circulation $ f$ is the map
$\partial f: V(G) \to \bbbr$ defined as
$$\partial f(v) = \sum_{e \in E^+_D(v)} f(e) - \sum_{e \in E^-_D(v)}f(e). \eqno(1)$$
Here $E^+_D(v)$ (resp. $E^-_D(v)$) is the set of directed edges in $D$ of the form $(v,u)$ (resp. of the form $(u, v)$).

A {\em flow in $G$} is a circulation in $G$ with $\partial f =0$.
If $r$ is a real number and $f$ is a flow with
$1 \le |f(e)| \le r-1$ for every edge $e$, then $f$ is called a {\em circular $r$-flow} in $G$.
The {\em circular flow number} $\Phi_c(G)$ of $G$ is the least $r$ such that $G$   admits a circular
$r$-flow. If $r$ is an integer and $1 \le |f(e)| \le r-1$ are integers   then $f$ is called a {\em nowhere zero $r$-flow}.
The {\em flow number} $\Phi(G)$ of $G$ is the least integer $r$ such that $G$ admits a nowhere zero $r$-flow.
It is known \cite{GTZ98} that $\Phi(G) = \lceil \Phi_c(G) \rceil$ for any bridgeless graph $G$.

Integer flow was originally introduced by Tutte \cite{Tutte1949, Tutte1954} as a generalization of map colouring.
Tutte proposed the following three conjectures that motivated most of the studies on integer flow in graphs.
\begin{itemize}
\item {\bf $5$-Flow Conjecture}: Every bridgeless graph admits a nowhere zero $5$-flow.
\item {\bf $4$-Flow Conjecture}: Every bridgeless graph with no Petersen minor admits a nowhere zero $4$-flow.
\item {\bf  $3$-Flow Conjecture}: Every $4$-edge connected graph admits a nowhere zero $3$-flow.
\end{itemize}

The concept of circular flow number was introduced in \cite{GTZ98} in 1998 as the dual of the circular chromatic number
(cf. \cite{Survey01}),
and as a refinement of the flow number. However, in early 1980's,
Jaeger \cite{JAE81} already studied circular flow in graphs and proposed a conjecture which is equivalent to the following:

\begin{itemize}
\item {\bf $(2+\frac 1k)$-flow conjecture}: For any positive integer $k$, if $G$ is $4k$-edge connected, then $\Phi_c(G) \le 2 + \frac 1k$.
\end{itemize}

Jaeger's conjecture is very strong. The $k=1$ case is the $3$-flow conjecture,
and the $k=2$ case implies the $5$-flow conjecture.

All the above conjectures remain open.  Recently,    Thomassen \cite{Thomassen2012}  made a breakthrough
in the study of $3$-flow conjecture   by proving  that every $8$-edge connected graph
 admits a nowhere zero
$3$-flow. Moreover, for $k \ge 1$, every $(8k^2+10k+3)$-edge connected graph
has circular flow number at most $2 + \frac 1k$.  This result is improved by
Lov\'{a}sz,  Thomassen,  Wu and  Zhang in \cite{LTWZ2012},
where it is proved that for any positive integer $k$, if a graph
$G$ has odd edge connectivity at least $6k+1$, then $\Phi_c(G) \le 2 + \frac 1k$.

This paper proves an analog of this result for signed graphs.

A {\em signed graph} is a pair $(G, \sigma)$, where $G$ is a graph and $\sigma: E(G) \to \{1,-1\}$
assigns to each edge a sign: an edge
$e$ is either  a {\em positive edge} (i.e., $\sigma(e)=1$) or a {\em negative edge} (i.e., $\sigma(e)=-1$).
An {\em orientation }  $\tau$  of $(G, \sigma)$ assigns "orientations" to the edges of $G$ as follows: if $e=xy$ is a positive edge, then
the edge is oriented either from $x$ to $y$ or from $y$ to $x$.
In the former case, $e \in E^+_{\tau}(x) \cap E^-_{\tau}(y)$,
and in the later case, $e \in E^-_{\tau}(x) \cap E^+_{\tau}(y)$.
If $e=xy$ is a negative edge, then
the edge is oriented either from both $x$ and $y$ or towards both $x$ and $y$. In the former case, $e \in E^+_{\tau}(x) \cap E^+_{\tau}(y)$
and $e$ is called a {\em sink edge}.
In the later case, $e \in E^-_{\tau}(x) \cap E^-_{\tau}(y)$ and $e$ is called a {\em source edge}.
An orientation $\tau$ of $(G,\sigma)$ may be  viewed as a mapping which assigns to each positive edge one of its end vertices as the {\em head} of the directed edge,
and labels each negative edge either as a source edge or as a sink edge.
If the orientation is clear from the context, we write $E^+(x)$ for $E^+_{\tau}(x)$, and etc.
An oriented signed graph (i.e., a signed graph plus an orientation)  is  called a {\em bidirected graph}.

If all the edges of $G$ are positive, then an orientation of $(G,\sigma)$ is a directed graph.
So  the concept of signed graphs is a generalization  of graphs,
and bidirected graphs is a generalization of  digraphs.
All the concepts concerning flow in  graphs
can be naturally extended to signed graphs.
For example,
a circulation in a signed graph $(G, \sigma)$ is an orientation $\tau$ of $(G, \sigma)$ together with
 a mapping $f: E(G) \to \bbbr$. Similarly, we usually denote a circulation in $(G,\sigma)$ by $f$, and call
 $\tau$ the orientation of $(G,\sigma)$ associated with $f$.
 The boundary $\partial f$ of a circulation of $(G, \sigma)$ is defined in the same way as in (1).
The concepts of flow, circular $r$-flow, nowhere zero $r$-flow, circular flow number and flow number are extended to signed graphs
in the same way.
In case a signed graph $(G,\sigma)$ does not admit a nowhere zero
$k$-flow for any $k$, then let $\Phi(G,\sigma) = \Phi_c(G,\sigma) = \infty$.

For every question concerning  flow in graphs, one can ask the corresponding question
for signed graphs. However, less results  are known for flow in signed graphs. There are also less conjectures
for flow in signed graphs. The only well-known conjecture for flow in signed graphs is the following
conjecture proposed by Bouchet \cite{Bou83}.

\begin{itemize}
\item
{\bf $6$-Flow Conjecture}: If a signed graph admits a nowhere zero  flow, then it admits a nowhere zero $6$-flow.
\end{itemize}

Bouchet \cite{Bou83} proved that if a signed graph admits a nowhere zero  flow, then its flow number is at most $216$.
Z\'{y}ka \cite{Zyka87} improved the upper bound  to $30$.
Khelladi \cite{Khell87} proved that for $4$-edge connected graphs, the upper bound can be reduced to $18$, and Xu and Zhang \cite{XuZhang05}
proved that  for $6$-edge connected graphs, the upper bound can be reduced to $6$.

The above mentioned results show that
highly edge connected graphs have circular flow number close to $2$.
One naturally wonder if a similar result holds for signed graphs.
By applying the main result in \cite{LTWZ2012}, we show that  the answer to  this question is positive,
provided that an additional minor necessary condition is satisfied.

If $(G, \sigma)$ is a signed graph and $v$ is a vertex of $G$, then by a {\em switching} at $v$, we obtain another
signed graph $(G, \sigma')$, where $\sigma'(e) = - \sigma(e)$ if $e \in E(v)$ and $\sigma'(e) = \sigma(e)$ otherwise.
Here $E(v)$ is the set of edges incident to $v$.
We say two signed graphs $(G, \sigma)$ and $(G, \sigma')$ with the same underlying graph $G$ are {\em equivalent} if
one can be obtained from the other by a sequence of switchings.
It is easy to see that equivalent signed graphs have the same flow number and the same  circular flow number.
We say a signed graph $(G, \sigma)$ is  {\em  $(2k+1)$-unbalanaced } if every signed graph equivalent to
$(G, \sigma)$  has   at least $2k+1$ negative edges, and say $(G, \sigma)$ is {\em essentially $(2k+1)$-unbalanced} if
every signed graph equivalent to
$(G, \sigma)$  has either an even number of negative edges or  at least $2k+1$ negative edges.
For a signed graph to have circular flow number at most $2+ \frac 1k$, being highly edge connected is not sufficient.
For example,   a highly edge connected signed graph may have exactly one negative edge. In this case
it does not admit a nowhere zero flow.
It is also easy to verify that if a signed graph has exactly $2k+1$ negative edges, then its circular flow number is at least
$2+ \frac 1k$.
So the "correct" question is whether an essentially $(2k+1)$-unbalanced (for some large integer $k$) highly edge connected signed graph
has circular flow number close to $2$.
Raspaud and Zhu \cite{RZ2011} proved that  $6$-edge connected $3$-unbalanced signed graph
has flow number at most $4$ and has circular flow number strictly less than
$4$.  In this paper, we  prove the following result.

\begin{theorem}
\label{main}
Let $k$ be a positive integer.  If a signed graph $(G, \sigma)$ is $(12k-1)$-edge connected and essentially $(2k+1)$-unbalanced, then
$\Phi_c(G,\sigma) \le 2 + \frac 1k$.
\end{theorem}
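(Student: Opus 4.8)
The strategy is to reduce the signed-graph statement to the known graph result of Lov\'asz–Thomassen–Wu–Zhang (LTWZ): every graph of odd edge connectivity at least $6k+1$ has circular flow number at most $2+\frac1k$. The natural device is to pass from a bidirected graph to an ordinary (2-lifted) directed graph by a "doubling/covering" construction, where each negative edge is replaced by a gadget that encodes the source/sink behaviour. A circular $(2+\frac1k)$-flow in the derived graph, if it respects the symmetry of the covering, projects down to a circular $(2+\frac1k)$-flow in $(G,\sigma)$.

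\begin{proof}[Proof proposal]
First I would reduce to a convenient normal form. Using switchings (which preserve $\Phi_c$), and the hypothesis that $(G,\sigma)$ is essentially $(2k+1)$-unbalanced, I would arrange that $(G,\sigma)$ has a number of negative edges that is either even or at least $2k+1$; if it is already $0$ the graph is (equivalent to) an ordinary graph and LTWZ applies directly, so assume there are $t$ negative edges with $t$ even, $t\ge 2$, or $t$ odd, $t\ge 2k+1$. The core construction is to build an auxiliary \emph{ordinary} graph $H$ from $(G,\sigma)$: take two disjoint copies $G_1,G_2$ of $G$ restricted to the positive edges, and for each negative edge $e=xy$ join the copies so that an orientation of $e$ as a source/sink edge in $(G,\sigma)$ corresponds to a genuine orientation of the connecting edges in $H$ (the standard "signed cover" / double cover associated to $\sigma$). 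A flow on $H$ that is invariant under the copy-swapping involution $\iota$ is exactly the same data as a flow on the bidirected graph $(G,\sigma)$, with the same edge values; in particular a circular $r$-flow on $H$ that is $\iota$-symmetric gives a circular $r$-flow on $(G,\sigma)$.

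The second step is to verify the connectivity hypothesis needed to apply LTWZ to $H$. Here is where the numerics $12k-1$ and $6k+1$ must be matched: an edge cut in $H$ either (a) is "symmetric", separating $G_1\cup G_2$ along a cut of $G$, in which case it has size $2\lambda$ where $\lambda$ is the corresponding cut size in $G$, hence at least $2(12k-1)>6k+1$; or (b) it is "asymmetric", in which case it must use connecting edges coming from negative edges, and one shows that an odd cut in $H$ of size $<6k+1$ would force $(G,\sigma)$ to be equivalent (via switching the smaller side) to a signed graph with fewer than $2k+1$ negative edges and an odd number of them, contradicting essential $(2k+1)$-unbalancedness. This is the heart of the argument: translating a small odd cut in the cover into a small balanced-after-switching set of negative edges. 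I expect to need the elementary fact that the parity of the number of negative edges across a vertex subset $S$ equals the parity of the number of negative edges in the switched graph $(G,\sigma^S)$ inside $S$, so that an odd cut detects an odd, hence (by hypothesis) large, number of negative edges.

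The third step is the symmetrization. LTWZ gives \emph{some} circular $(2+\frac1k)$-flow $f$ on $H$, but not necessarily an $\iota$-symmetric one. I would average: since the set of circular $r$-flows (flows with $1\le|f(e)|\le r-1$) is not convex, one cannot simply average $f$ and $f\circ\iota$ directly; instead one works with $\mathbb{Z}_{2k+1}$-flows or with the group-valued reformulation where symmetrization is legitimate, or one uses the orientation-reversing structure so that $\frac12(f+f^{\iota})$, after possibly re-orienting, again lands in the feasible range. Concretely, the cleanest route is to use the equivalence (valid for bridgeless graphs) between circular $(2+\frac1k)$-flows and nowhere-zero $\mathbb{Z}_{2k+1}$-flows of a certain balanced type, carry out the symmetrization at the level of $\mathbb{Z}_{2k+1}$, and then translate back. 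Pushing the resulting symmetric flow down to $(G,\sigma)$ yields $\Phi_c(G,\sigma)\le 2+\frac1k$.

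The main obstacle is the second step — showing that the $(12k-1)$-edge-connectivity plus essential $(2k+1)$-unbalancedness of $(G,\sigma)$ really does buy odd-edge-connectivity $\ge 6k+1$ in the cover $H$ — and, intertwined with it, making the symmetrization in step three rigorous rather than heuristic. The factor $12k-1$ versus $6k+1$ suggests a clean bookkeeping: each unit of cut in $G$ becomes two units in $H$, small odd cuts are obstructed by the unbalancedness, and the "$-1$" absorbs a parity correction. If the symmetrization cannot be done exactly, a fallback is to apply a slightly stronger group-labelled / modular-orientation version of the LTWZ theorem (which their proof in fact provides) directly to the pair $(H,\iota)$, equivariantly.
\end{proof}
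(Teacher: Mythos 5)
There is a genuine gap, and it sits exactly where you flag it: the symmetrization/translation step. Your plan produces, at best, a special $Z_{2k+1}$-flow that is invariant under the covering involution and hence descends to a special $Z_{2k+1}$-flow on $(G,\sigma)$. But for \emph{signed} graphs the implication ``special $Z_{2k+1}$-flow $\Rightarrow$ $(2k+1,k)$-flow'' is false in general --- the paper points out (citing Xu and Zhang) that a cubic signed graph with a special $Z_3$-flow has a nowhere-zero $3$-flow only if the underlying graph has a perfect matching. If instead you try to do the integer lifting upstairs on the ordinary cover $H$ (where the lifting is valid), the standard lifting argument involves non-canonical choices and does not produce an $\iota$-equivariant flow, so you cannot push it down. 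Either way, the passage from a group-valued flow to a genuine circular $(2+\frac1k)$-flow on the signed graph is the hard part of the theorem and your proposal does not supply it. A secondary concern: your connectivity bookkeeping ($2(12k-1) > 6k+1$ for symmetric cuts) leaves enormous slack, which suggests the constant $12k-1$ is not being used where it is actually needed.

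The paper avoids the cover entirely and works directly on $(G,\sigma)$. After switching to minimize the number of negative edges, it shows the positive subgraph $R$ is $6k$-edge connected, so Theorem~\ref{LTWZ} gives $\beta$-orientations of $R$ for any prescribed boundary, and Nash--Williams gives $3k$ edge-disjoint spanning trees in $R$. The key construction (Lemma~\ref{balanced}) orients the negative edges alternately source/sink along an Eulerian cycle of $Q\cup T_1\cup T_2$ minus a parity subgraph, yielding a special $Z_{2k+1}$-circulation $f$ on $Q$ that is \emph{balanced}: $\sum_x \partial f(x)=0$ and $\Theta(X)= k|E_R[X,\bar X]|+\partial f(X)\ge k-2$ for every vertex set $X$ (the remaining $3k-2$ trees supply the positive slack). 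A $\beta$-orientation of $R$ with $\beta\equiv 2\partial f$ then extends $f$ to a special $Z_{2k+1}$-flow $f+g$, and a minimality-plus-reachability argument (reversing a directed path from $V^+$ to $V^-$ to decrease $\sum_x|\partial(f+g)(x)|$, with the balanced condition ruling out the case where no such path exists) converts it into an honest $(2k+1,k)$-flow. This balancedness condition is precisely the quantitative control your symmetrization step would need and does not have; without an analogue of it, the approach via the double cover cannot be completed as described.
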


\section{Modulo $(2k+1)$-orientations and $Z_{2k+1}$-flow}

Given an orientation $\tau$ of a signed graph $(G, \sigma)$,
the out-degree and in-degree of each vertex $x$ is defined in the same way as for digraphs:
$d^+_{\tau}(x) = |E^+_{\tau}(x)|$ and $d^-_{\tau}(x) = |E^-_{\tau}(x)|$.

Fix a positive integer $k$. Given a mapping $\beta: V(G) \to Z_{2k+1}$,
an orientation $ \tau$ is called a $\beta$-orientation if for every vertex $x$ of $G$,
$$d^+_{\tau}(x)- d^-_{\tau}(x)\equiv \beta(x) \pmod{2k+1}.$$
If $\beta(x) = 0$, then a $\beta$-orientation of $(G, \sigma)$ is called   a {\em modulo $(2k+1)$-orientation}.

A mapping $\beta: V(G) \to Z_{2k+1}$  is called a $Z_{2k+1}$-boundary if $\sum_{x \in V}\beta(x) \equiv 0 \pmod{2k+1}$.
Note that for any orientation $D$ of an ordinary graph $G$, $\sum_{x \in V(G)} (d^+_D(x)-d^-_D(x)) = 0$.
Thus for a graph $G$ to have a $\beta$-orientation, one necessary condition is that   $\beta$ be a $Z_{2k+1}$-boundary.
The following result, proved in \cite{LTWZ2012}, says that if $G$ is highly edge connected, then the converse is also true.

\begin{theorem}
\label{LTWZ}\cite{LTWZ2012}
For any positive integer $k$, every $6k$-edge connected graph has a $\beta$-orientation for any $Z_{2k+1}$-boundary $\beta$ of $G$.
\end{theorem}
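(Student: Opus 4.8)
The plan is to set $p = 2k+1$ (so the hypothesis is $(3p-3)$-edge-connectivity, since $3p-3 = 6k$) and to prove the equivalent statement that every $(3p-3)$-edge-connected graph is \emph{strongly $Z_p$-connected}, meaning it admits a $\beta$-orientation for every $Z_p$-boundary $\beta$. First I would record the translation between orientations and group-valued flows: fix a reference orientation $D_0$ and, for a candidate orientation $\tau$, record the value $+1$ on each edge where $\tau$ agrees with $D_0$ and $-1$ where it reverses $D_0$. A direct edge-by-edge check shows $d^+_\tau(x) - d^-_\tau(x) \equiv \partial f(x) \pmod{p}$, where $f : E(G) \to \{+1,-1\} \subseteq Z_p \setminus \{0\}$. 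Thus a $\beta$-orientation is precisely a $\{\pm 1\}$-valued $Z_p$-flow with boundary $\beta$, and the task becomes one of prescribing boundaries of such flows.

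The engine is an induction on $|V(G)| + |E(G)|$ through a minimal counterexample, supported by a contraction lemma. The contraction lemma asserts that if $H$ is a strongly $Z_p$-connected subgraph of $G$, then $G$ has a $\beta$-orientation if and only if the contraction $G/H$ does. Indeed, given a $\beta$-orientation of $G/H$, the edges incident to $H$ are oriented, contributing a fixed amount $c(x)$ to each $x \in H$; the condition at the contracted vertex forces $\sum_{x \in H}(\beta(x) - c(x)) \equiv 0 \pmod p$, so $x \mapsto \beta(x) - c(x)$ is a valid $Z_p$-boundary of $H$, and strong $Z_p$-connectivity of $H$ supplies the internal orientation. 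Consequently a minimal counterexample $G$ contains no nontrivial strongly $Z_p$-connected proper subgraph; in particular, since small dense gadgets (such as configurations with enough parallel edges) can be shown to be strongly $Z_p$-connected base cases, $G$ contains no such contractible configuration.

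Next I would exploit the hypothesis: $(3p-3)$-edge-connectivity forces minimum degree at least $3p-3$, so $G$ is dense, and an averaging argument isolates a vertex $v$ of controlled degree. The decisive step is to pass to a strictly smaller graph that still satisfies the connectivity requirement, by one of two reductions: either locate a subgraph that can be certified strongly $Z_p$-connected and contract it, contradicting minimality; or split off a suitable pair of edges at $v$ — invoking a Mader/Lov\'asz-type splitting-off theorem to preserve edge-connectivity — and adjust $\beta$ to a boundary $\beta'$ of the smaller graph, so that a $\beta'$-orientation lifts back to the desired $\beta$-orientation of $G$.

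The main obstacle is exactly this reduction step. A naive split or deletion can drop the edge-connectivity below $3p-3$ and destroy the inductive hypothesis, and the whole force of the theorem lies in the fact that the constant $3p-3 = 6k$ is tuned so that an admissible reduction always exists. To keep the induction self-sustaining I would strengthen the inductive statement — relaxing the degree/connectivity demand at a single distinguished vertex while tracking the boundary value there — so that both splitting off at $v$ and contracting a certified subgraph leave one inside the class of instances to which the strengthened hypothesis applies. Designing this invariant, and proving that every irreducible dense instance admits either an admissible splitting or a contractible configuration, is where essentially all the difficulty resides; the flow/orientation translation and the contraction lemma are routine by comparison.
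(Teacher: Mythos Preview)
The paper does not prove Theorem~\ref{LTWZ}. It is quoted verbatim from \cite{LTWZ2012} and used as a black box: Claim~\ref{clm1} establishes that the positive subgraph $R$ is $6k$-edge connected, and Theorem~\ref{LTWZ} is then invoked to produce a $\beta'$-orientation of $R$. No argument for the theorem itself appears anywhere in this paper, so there is no ``paper's own proof'' against which to compare your proposal.

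As for the proposal on its own terms: you have correctly identified the architecture of the argument in \cite{LTWZ2012} --- the translation to $\{\pm 1\}$-valued $Z_p$-flows, the contraction lemma, and the minimal-counterexample induction with a strengthened hypothesis allowing a distinguished vertex of low degree. But you have also, quite honestly, flagged that ``designing this invariant \ldots\ is where essentially all the difficulty resides,'' and then stopped. That is precisely the content of the theorem: the correct strengthened statement (allowing one vertex $z_0$ with small degree and a prescribed value $\beta(z_0)$, together with a careful edge-count hypothesis replacing raw edge-connectivity) and the case analysis showing it survives the reduction are the entire proof. What you have written is a reasonable table of contents for \cite{LTWZ2012}, not a proof; in particular, the appeal to a ``Mader/Lov\'asz-type splitting-off theorem'' is not how the reduction in \cite{LTWZ2012} actually proceeds, and you would need to specify the invariant explicitly before any splitting or contraction argument can be checked.
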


First we prove an analog of Theorem \ref{LTWZ} for signed graphs. Note that for an orientation $\tau$ of a signed graph $(G,\sigma)$,
the summation $\sum_{x \in V(G)} (d^+_{\tau}(x)-d^-_{\tau}(x)) $ is not necessarily $0$.

\begin{theorem}
\label{mod-o}
For any positive integer $k$, for any mapping $\beta: V(G) \to Z_{2k+1}$, every $(2k+1)$-unbalanced
$(12k-1)$-edge connected
signed graph $(G,\sigma)$
has a $\beta$-orientation.
\end{theorem}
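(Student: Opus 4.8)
The plan is to reduce the signed-graph statement to the ordinary-graph statement of Theorem \ref{LTWZ} by passing to a suitable ``double cover'' type construction, or more directly by exploiting switching equivalence to localize the negative edges. First I would use the fact that $(G,\sigma)$ is $(2k+1)$-unbalanced: among all signed graphs equivalent to $(G,\sigma)$, choose one, still called $(G,\sigma)$, with the minimum number of negative edges; since switching does not change $d^+_\tau(x)-d^-_\tau(x)$ modulo anything in a way that affects the existence of a $\beta$-orientation (switching at $v$ merely reverses the roles of $E^+(v)$ and $E^-(v)$ for edges at $v$, which can be absorbed by adjusting $\beta(v)\mapsto-\beta(v)$), it suffices to prove the theorem for this minimal representative. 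The key structural point I expect to need is that in a minimal representative, the negative edges form a kind of ``minimal'' set — e.g. no edge cut of $G$ consisting only of negative edges can be more than half the size of the cut — and this, together with $(12k-1)$-edge connectivity, will let me contract or reroute the negative part.

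The main construction I would carry out: build an auxiliary ordinary graph $\hat G$ on which Theorem \ref{LTWZ} applies. One natural candidate is to take two disjoint copies $G_1, G_2$ of $G$, join them across the negative edges (each negative edge $xy$ becomes two edges $x_1y_2$ and $x_2y_1$, say), and keep positive edges within each copy; an orientation of $(G,\sigma)$ then corresponds to a ``symmetric'' orientation of $\hat G$, and a $\beta$-orientation of $(G,\sigma)$ corresponds to a $\hat\beta$-orientation of $\hat G$ for the boundary $\hat\beta$ that puts $\beta(x)$ on $x_1$ and $-\beta(x)$ on $x_2$ (or some such; the signs must be arranged so that $\hat\beta$ is a $Z_{2k+1}$-boundary, using that $\sum_x\beta(x)$ need not vanish but the copies cancel). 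I would then need two things: (i) $\hat G$ is $6k$-edge connected, which should follow from $(G,\sigma)$ being $(12k-1)$-edge connected provided the negative edges are ``spread out'' enough — here is where the $12k-1 \approx 2\cdot 6k$ bound comes from, and where $(2k+1)$-unbalancedness must be invoked to rule out small cuts dominated by negative edges; and (ii) the $\hat\beta$-orientation produced by Theorem \ref{LTWZ} can be made to respect the $\mathbb{Z}/2$ symmetry swapping the two copies, so that it descends to an orientation of $(G,\sigma)$. Point (ii) is delicate because Theorem \ref{LTWZ} gives no symmetry; I would instead argue that it is enough to find \emph{any} $\beta$-orientation by a parity/averaging argument, or restructure $\hat G$ so that the symmetry is automatic (for instance by subdividing each negative edge with a single new vertex of degree two whose $\hat\beta$-value is forced, turning a negative edge into a ``source/sink gadget'').

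The steps, in order, would be: (1) reduce to a minimal switching-equivalent representative and record the resulting constraint on negative edge cuts; (2) define the auxiliary graph $\hat G$ and the lifted boundary $\hat\beta$, checking $\hat\beta$ is a $Z_{2k+1}$-boundary; (3) prove the edge-connectivity bound $\lambda(\hat G)\ge 6k$ from $\lambda(G)\ge 12k-1$ plus $(2k+1)$-unbalancedness, which is the main obstacle — one must show that a cut of $\hat G$ of size $<6k$ would project to a cut of $G$ of size $<12k-1$ or to a negative-edge structure contradicting minimality; (4) apply Theorem \ref{LTWZ} to get a $\hat\beta$-orientation of $\hat G$; (5) descend this orientation to a $\beta$-orientation of $(G,\sigma)$, handling the symmetry issue via the gadget construction of step (2). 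I expect step (3) to be where essentially all the work and all the loss in the constant $12k-1$ resides; steps (1), (2), (4), (5) should be bookkeeping once the construction is set up correctly.
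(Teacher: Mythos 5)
Your step (1) — passing to a switching-equivalent representative with the minimum number of negative edges and negating $\beta$ on the switched vertices — matches the paper exactly, and the structural fact you anticipate (no cut can be majority-negative in a minimal representative) is precisely the paper's Claim 1. But your main construction, the double cover $\hat G$, has a concrete and fatal flaw: the cut of $\hat G$ separating the two copies $V(G_1)$ and $V(G_2)$ consists exactly of the $2|Q|$ edges arising from negative edges, where $Q$ is the set of negative edges of the minimal representative. The hypothesis only guarantees $|Q|\ge 2k+1$, and $|Q|=2k+1$ is possible, so this cut can have size $4k+2<6k$ for every $k\ge 2$ no matter how large the edge connectivity of $G$ is. Hence Theorem \ref{LTWZ} cannot be applied to $\hat G$, and your step (3) cannot be repaired by strengthening the connectivity hypothesis on $G$. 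Your acknowledged difficulty in step (5) is also genuine and unresolved: Theorem \ref{LTWZ} gives no control over the symmetry of the $\hat\beta$-orientation, there is no averaging argument for orientations over $Z_{2k+1}$, and the proposed fix of subdividing each negative edge by a degree-two vertex destroys $6k$-edge connectivity outright. Finally, you never identify where the quantitative bound $|Q|\ge 2k+1$ enters; it must, since the theorem is false without it.

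The paper's route is more direct and avoids all of this. It first orients the negative edges $Q$ alone, choosing the numbers of sink and source edges (namely $(|Q|+t)/2$ and $(|Q|-t)/2$, or $(|Q|+t-2k-1)/2$ and $(|Q|+2k+1-t)/2$, depending on parity) so that $\sum_x\bigl(d^+_\tau(x)-d^-_\tau(x)\bigr)\equiv 2t\equiv\sum_x\beta(x)\pmod{2k+1}$; this is exactly where $|Q|\ge 2k+1$ is needed to keep these counts nonnegative. The residual boundary $\beta'(x)=\beta(x)-(d^+_\tau(x)-d^-_\tau(x))$ is then a genuine $Z_{2k+1}$-boundary, the positive subgraph $R$ is $6k$-edge connected by the minimality argument you already sketched (a cut of $R$ of size at most $6k-1$ inside a $(12k-1)$-cut of $G$ would have at least $6k$ negative edges, so switching one side reduces $|Q|$), and Theorem \ref{LTWZ} applied to $R$ with boundary $\beta'$ finishes the proof. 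I would encourage you to discard the double cover and instead combine your own observation about negative edges in cuts with a preliminary orientation of $Q$ that absorbs the nonzero total of $\beta$.
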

\begin{proof}
Assume $(G,\sigma)$ is a $(2k+1)$-unbalanced
$(12k-1)$-edge connected
signed graph, and $\beta$ is a mapping from $V(G)$ to $Z_{2k+1}$.
We may assume $(G, \sigma)$ has  the minimum number of negative edges among all signed graphs
equivalent to $(G, \sigma)$. Otherwise let $(G, \sigma')$ be the one with minimum number of negative edges.
Assume $(G, \sigma')$ is obtained from $(G,\sigma)$ by switching vertices in $X$. Let $\beta': V(G) \to Z_{2k+1}$ be defined
as
\begin{equation*}
 \beta' (v)=\begin{cases}
 -\beta   (v),  &  \text{if $v \in X$,}
\\
\beta (v),  & \text{otherwise.}
\end{cases}
\end{equation*}
We shall show that $(G, \sigma')$ has a $\beta'$-orientation, which is
equivalent to  $(G, \sigma)$ has a $\beta$-orientation.

Fix a mapping $\beta: V \to Z_{2k+1}$.
Let $0 \le t \le 2k$ be the integer such that
$$  \sum_{x \in V(G)}\beta(x) \equiv 2t \pmod{2k+1}.$$
Let $Q$ be the set of negative edges in $(G, \sigma)$.
If $t \equiv |Q| \pmod{2}$, then let $\tau$ be an orientation of $Q$ with
$(|Q|+t)/2$  sink edges, and  $(|Q|-t)/2$  source edges.
Let $$\beta'(x) \equiv \beta(x) - (d^+_{\tau}(x)-d^-_{\tau}(x)) \pmod{2k+1}.$$ Since each sink edge (resp. source edge) contributes $2$ (resp. $-2$)
to the summation $\sum_{x \in V(G)} (d^+_{\tau}(x)-d^-_{\tau}(x))$, we conclude that
\begin{eqnarray*}
\sum_{x \in V(G)} (d^+_{\tau}(x)-d^-_{\tau}(x)) &=&  (|Q|+t)-(|Q|-t)  \\
&=& 2t   \equiv \sum_{x \in V(G)} \beta(x) \pmod{2k+1}.
\end{eqnarray*}
Thus $$\sum_{x \in V(G)} \beta'(x) \equiv \sum_{x \in V(G)} \beta(x) - \sum_{x \in V(G)} (d^+_{\tau}(x)-d^-_{\tau}(x)) \pmod{2k+1} \equiv 0 \pmod{2k+1}, $$ i.e., $\beta'$ is a $Z_{2k+1}$-boundary  of $G$.

If $t \equiv |Q|+1 \pmod{2}$, then let $\tau$ be an orientation of $Q$ with
$(|Q|+t-2k-1)/2$ sink edges and $(|Q|+2k+1-t)/2$   source edges.
The same calculation shows that  $\beta'$ is a $Z_{2k+1}$-boundary  of $G$.

Let $R$   be the subgraph of $G$ induced by positive   edges of $(G, \sigma)$.

\begin{claim}
\label{clm1}
The graph $R$ is $6k$-edge connected.
\end{claim}
\begin{proof}
Assume to the contrary that $R$ has an edge cut $E_R[X, \bar{X}]$ of size at most $6k-1$.
Since $G$ is $(12k-1)$-edge connected, we have $|E_Q[X, \bar{X}]| \ge 6k > |E_R[X, \bar{X}]|$.
Let $(G, \sigma')$ obtained from $(G, \sigma)$  by switching
at all vertices in $X$. Then  $\sigma'(e) = -\sigma(e)$ if $e \in E_G[X, \bar{X}]$
and $\sigma'(e) = \sigma(e)$ otherwise. Thus $(G, \sigma')$ has less  negative edges than $(G, \sigma)$,
contrary to our choice of $(G, \sigma)$.
\end{proof}
\bigskip

By Theorem \ref{LTWZ}, $R$ has a $\beta'$-orientation $D$,
i.e., $d^+_D(x) - d^-_D(x) \equiv \beta'(x) \pmod{2k+1}$ for every vertex $x$.
The union of this orientation of $R$ and the previously chosen orientation $\tau$ of $Q$
is an orientation $\tau'$ of  $(G,\sigma)$, with
$d^+_{\tau'}(x) = d^+_{\tau}(x) + d^+_D(x)$ and $d^-_{\tau'}(x) = d^-_{\tau}(x)+d^-_D(x)$. Hence for every vertex $x$,
\begin{eqnarray*}
d^+_{\tau'}(x) - d^-_{\tau'}(x) &=& (d^+_D(x)-d^-_D(x))+  (d^+_{\tau}(x)-d^-_{\tau}(x)) \\
&\equiv & \beta'(x) +  (d^+_{\tau}(x)-d^-_{\tau}(x)) \pmod{2k+1} \\
&\equiv&  \beta(x) \pmod{2k+1}
\end{eqnarray*}
I.e., $\tau'$ is a $\beta$-orientation of $(G,\sigma)$.
\end{proof}

For   Theorem \ref{mod-o}, the condition that $(G, \sigma)$ be $(2k+1)$-unbalanced is needed.
For example, if $\beta$ is not a $Z_{2k+1}$-boundary, and $(G, \sigma)$ has no negative edges, then
$(G, \sigma)$ cannot have a $\beta$-orientation. For the proof above, if $|Q| < 2k+1$, then
 the numbers $|Q|-t$ and $|Q|+t-(2k+1)$ appeared in the proof might be negative.
On the other hand, if we restrict to modulo $(2k+1)$-orientations, this condition can be slightly weakened.
The following Theorem can be proved in the same way as Theorem \ref{mod-o}.

\begin{theorem}
\label{mod-oo}
For any positive integer $k$,  every $(12k-1)$-edge connected,
essentially $(2k+1)$-unbalanced signed graph $(G,\sigma)$
has a modulo $(2k+1)$-orientation. \qed
\end{theorem}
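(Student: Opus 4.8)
The plan is to reprise the proof of Theorem~\ref{mod-o} essentially verbatim with the constant boundary $\beta \equiv 0$, and to check that the single place where full $(2k+1)$-unbalancedness was used still goes through under the weaker hypothesis. As in that proof, I would first replace $(G,\sigma)$ by a representative of its switching class with the minimum number of negative edges; since $\beta \equiv 0$ transforms to $\beta' \equiv 0$ under any switching, a modulo $(2k+1)$-orientation of this minimal representative is equivalent to a modulo $(2k+1)$-orientation of the original graph, so it suffices to treat the minimal representative. Write $Q$ for its set of negative edges and $R$ for the subgraph induced by the positive edges.

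Because $\beta \equiv 0$, we have $\sum_{x \in V(G)}\beta(x) \equiv 0$, so the integer $t$ of the proof of Theorem~\ref{mod-o} is $0$. If $|Q|$ is even, I orient $Q$ with $|Q|/2$ sink edges and $|Q|/2$ source edges; if $|Q|$ is odd, the construction asks for $(|Q|-2k-1)/2$ sink edges and $(|Q|+2k+1)/2$ source edges, and the first quantity is a nonnegative integer precisely when $|Q| \ge 2k+1$. This is exactly the point where the hypothesis enters: since the chosen representative has the fewest negative edges and $(G,\sigma)$ is essentially $(2k+1)$-unbalanced, $|Q|$ is either even or at least $2k+1$. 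In either case the desired orientation $\tau$ of $Q$ exists, and the same bookkeeping as in Theorem~\ref{mod-o} shows that $\beta'(x) \equiv -(d^+_{\tau}(x)-d^-_{\tau}(x)) \pmod{2k+1}$ is a $Z_{2k+1}$-boundary of $R$.

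Next I would reuse Claim~\ref{clm1} word for word: if $R$ had an edge cut of size at most $6k-1$, then since $G$ is $(12k-1)$-edge connected the matching cut inside $Q$ would have at least $6k$ edges, so switching across it would yield an equivalent signed graph with strictly fewer negative edges, contradicting minimality. Hence $R$ is $6k$-edge connected, and Theorem~\ref{LTWZ} supplies a $\beta'$-orientation $D$ of $R$. Gluing $D$ to the orientation $\tau$ of $Q$ gives an orientation $\tau'$ of $(G,\sigma)$ with $d^+_{\tau'}(x)-d^-_{\tau'}(x) \equiv \beta'(x) + (d^+_{\tau}(x)-d^-_{\tau}(x)) \equiv 0 \pmod{2k+1}$ for every vertex $x$, i.e., a modulo $(2k+1)$-orientation.

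I do not expect any obstacle beyond what was already handled for Theorem~\ref{mod-o}; the only step that needs genuine attention is the parity/size dichotomy for $|Q|$, and this is precisely why the hypothesis can be relaxed from "$(2k+1)$-unbalanced" to "essentially $(2k+1)$-unbalanced". When $|Q|$ is odd we really do need $|Q| \ge 2k+1$ for the sink/source split, but when $|Q|$ is even the balanced split into $|Q|/2$ sink edges and $|Q|/2$ source edges already achieves the correct contribution to $\sum_{x}(d^+_{\tau}(x)-d^-_{\tau}(x))$, so no lower bound on $|Q|$ is required in that case.
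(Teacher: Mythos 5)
Your proposal is correct and is exactly the argument the paper intends: the paper gives no separate proof of Theorem~\ref{mod-oo}, stating only that it "can be proved in the same way as Theorem~\ref{mod-o}," and its preceding remark identifies the sink/source count $(|Q|+t-(2k+1))/2$ as the only place where a lower bound on $|Q|$ is needed. You correctly observe that with $\beta\equiv 0$ (so $t=0$) the odd-$|Q|$ case is the only one requiring $|Q|\ge 2k+1$, which is precisely what essential $(2k+1)$-unbalancedness guarantees for the minimal representative, while Claim~\ref{clm1} and the rest of the argument carry over unchanged.
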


Assume $(G,\sigma)$ is a signed graph and $A$ is an abelian group.
An $A$-circulation of $(G,\tau)$ is an orientation $\tau$ of $(G, \sigma)$
together with  a mapping
$f: E(G) \to A$. The boundary of an $A$-circulation $f$ of $(G,\sigma)$ is defined in
the same way as before, i.e., $\partial f(x) = \sum_{e \in E^+(x)}f(e) - \sum_{e \in E^-(x)}f(e)$,
where the summation is the group operation.
Similarly, an $A$-flow of $(G,\sigma)$ is an $A$-circulation $f$ with $\partial f(x)=0$ for every vertex $x$.

Let $k$ be a positive integer. We consider the group $Z_{2k+1}$.
A {\em special $Z_{2k+1}$-circulation} (resp.
{\em special $Z_{2k+1}$-flow}) in a signed graph $(G, \sigma)$ is a $Z_{2k+1}$-circulation
(resp. a $Z_{2k+1}$-flow)  $f$ with
$f(e) \in \{k, k+1\}$ for every edge $e$.

\begin{cor}
\label{z2k+1}
For any positive integer $k$, every $(12k-1)$-edge connected,
essentially $(2k+1)$-unbalanced signed graph $(G,\sigma)$
  admits a special $Z_{2k+1}$-flow.
\end{cor}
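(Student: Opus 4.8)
The plan is to derive Corollary~\ref{z2k+1} from Theorem~\ref{mod-oo} by the standard correspondence between modulo $(2k+1)$-orientations and special $Z_{2k+1}$-flows. First I would invoke Theorem~\ref{mod-oo} to obtain a modulo $(2k+1)$-orientation $\tau$ of $(G,\sigma)$, so that $d^+_{\tau}(x) - d^-_{\tau}(x) \equiv 0 \pmod{2k+1}$ for every vertex $x$. Next, keeping the same orientation $\tau$, I would define a $Z_{2k+1}$-circulation $f$ by simply assigning the constant value $f(e) = k$ to every edge $e$; note $k \in \{k,k+1\}$, so $f$ is special by construction. Then for each vertex $x$,
\[
\partial f(x) = \sum_{e \in E^+_{\tau}(x)} k - \sum_{e \in E^-_{\tau}(x)} k = k\,(d^+_{\tau}(x) - d^-_{\tau}(x)) \equiv 0 \pmod{2k+1},
\]
where the last congruence uses that $\tau$ is a modulo $(2k+1)$-orientation and that $k$ is an integer. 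Hence $\partial f \equiv 0$, so $f$ is a special $Z_{2k+1}$-flow of $(G,\sigma)$, as required.

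The argument above is essentially immediate once Theorem~\ref{mod-oo} is in hand, so there is no real obstacle; the only subtlety worth a remark is to be careful about what "special" means. One could equally take $f(e) = k+1$ for all $e$ (then $\partial f(x) = (k+1)(d^+_{\tau}(x) - d^-_{\tau}(x)) \equiv 0$), or indeed combine the two values edge by edge — the point is that both $k$ and $k+1$ are $\equiv k$ or $\equiv -k \pmod{2k+1}$ only when one is careful, but the constant-$k$ choice sidesteps any case analysis. Since the whole content of the corollary is the existence of the underlying orientation, the real work has already been done in Theorems~\ref{mod-o} and~\ref{mod-oo}; this step merely repackages that orientation as a flow valued in $\{k,k+1\}$.

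One might also phrase the converse direction for context (it is not needed here): a special $Z_{2k+1}$-flow, after possibly switching the labels $k \leftrightarrow k+1$ via reversing an edge's orientation, corresponds to a modulo $(2k+1)$-orientation, since $k \equiv -(k+1) \pmod{2k+1}$ means reversing an edge toggles its value between $k$ and $k+1$. But for the corollary as stated only the easy direction above is required, and I would present just that.
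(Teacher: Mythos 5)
Your proposal is correct and is essentially identical to the paper's own proof: both invoke Theorem~\ref{mod-oo} to get a modulo $(2k+1)$-orientation and then set $f(e)=k$ on every edge, so that $\partial f(x)=k\,(d^+_{\tau}(x)-d^-_{\tau}(x))\equiv 0 \pmod{2k+1}$. The extra remarks about the alternative value $k+1$ and the converse direction are harmless but unnecessary.
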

\begin{proof}
By Theorem \ref{mod-oo}, $(G, \sigma)$  has a modulo $(2k+1)$-orientation $\tau$.
Let $f(e) = k$ for all edges $e$ of $G$. Then $f$ is a special $Z_{2k+1}$-flow in $(G, \sigma)$.
\end{proof}

\section{Circular flow number}

Assume $p \ge 2q$ are positive integers. A $(p, q)$-flow in a signed graph $(G, \sigma)$ is an integer flow $f$ with
$f(e) \in \{q, q+1, \ldots, p-q\}$  for every edge $e$. If $(G. \sigma)$ admits a  $(p,q)$-flow $f$, then
 $g(e) = \frac{f(e)}{q}$ is a circular $p/q$-flow in  $(G,\sigma)$. Hence $\Phi_c(G, \sigma) \le p/q$.
(The converse is also true: if $\Phi_c(G, \sigma) \le p/q$, then $(G,\sigma)$ admits a $(p, q)$-flow. But we shall not use that.)
Thus to prove $\Phi_c(G, \sigma) \le 2 + \frac 1k$, it suffices to prove that $(G, \sigma)$ admits a
$(2k+1,k)$-flow.

It is well-known (cf. \cite{CQbook}) that  an ordinary graph $G$ admits a $(2k+1,k)$-flow if and only if
$G$ admits a special $Z_{2k+1}$-flow. However, this is not the case for signed graphs. It was proved in \cite{XuZhang05} that
if $(G, \sigma)$ is a cubic signed graph which admits a special $Z_3$-flow, then $(G, \sigma)$ admits a
nowhere zero $3$-flow (or equivalently a $(3,1)$-flow) if and only if $G$ has a perfect matching.
Nevertheless, we shall prove that the condition of Corollary \ref{z2k+1} implies that $(G, \sigma)$ admits a $(2k+1,k)$-flow
and hence $(G, \sigma)$ has circular flow number at most $2 + \frac 1k$.

\begin{theorem}
\label{main1}
If a signed graph $(G,\sigma)$ is
$(12k-1)$-edge connected and essentially $(2k+1)$-unbalanced, then $(G,\sigma)$ admits a $(2k+1,k)$-flow.
\end{theorem}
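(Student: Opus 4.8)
The plan is to start from the special $Z_{2k+1}$-flow $f_0$ guaranteed by Corollary~\ref{z2k+1} and lift it to an integer $(2k+1,k)$-flow. Recall $f_0$ uses an orientation $\tau$ of $(G,\sigma)$ and assigns every edge the value $k \in Z_{2k+1}$; equivalently, $\tau$ is a modulo $(2k+1)$-orientation. Now regard $f_0$ as an \emph{integer}-valued function by putting $f(e)=k$ for every edge; then $f$ takes values in $\{k,k+1,\dots,p-k\}$ with $p=2k+1$ (indeed the single value $k$), so the constraint on edge-values is automatically satisfied and will remain satisfied as long as we never push any $f(e)$ outside $\{k,k+1\}$. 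What fails is that $\partial f(x)$ need not be zero over $\bbbz$: since $\tau$ is only a \emph{modulo} $(2k+1)$-orientation, $d^+_\tau(x)-d^-_\tau(x)\equiv 0\pmod{2k+1}$ for each $x$, and a short computation gives $\partial f(x) = k\bigl(d^+_\tau(x)-d^-_\tau(x)\bigr)$, which is a multiple of $2k+1$ but not necessarily $0$. So the excess $\partial f(x)$ lies in $(2k+1)\bbbz$ at every vertex, and $\sum_x \partial f(x)$ need not vanish either, because in a bidirected graph source and sink edges break the usual handshake identity.

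The heart of the argument is therefore a \emph{balancing} step: modify $f$ by adding integer circulations supported on paths (and, crucially, on structures that exploit the negative edges) so as to kill all the vertex excesses while keeping every edge-value in $\{k,k+1\}$. Here is where high edge-connectivity and essential $(2k+1)$-unbalancedness must be used again. First I would split the vertices into those with positive excess and those with negative excess; since each excess is a multiple of $2k+1$, I can think of each vertex as a source/sink of a certain number of ``units'' of size $2k+1$. A unit of demand can be routed along a directed path $P$ in $\tau$ from a surplus vertex to a deficit vertex by adding $+1$ to $f$ on forward edges of $P$ and $-1$ on backward edges; but a single such push changes an edge from $k$ to $k+1$ or from $k$ to $k-1$, and the latter is forbidden. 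The standard trick (as in the ordinary-graph proof that a special $Z_{2k+1}$-flow lifts) is to instead route $k$ ``parallel'' unit-pushes so the net change on each used edge is in $\{-k,\dots,k\}$ and, combined with the base value $k$, stays in $\{0,\dots,2k\}$ — wait, that still allows $0$. The cleaner route, and the one I would pursue, is: because $G$ is $(12k-1)$-edge connected it is in particular highly connected, so by Menger/Nash-Williams there are many edge-disjoint paths between any two vertices, and one arranges the correction so that on each edge the total adjustment has absolute value at most $k$ \emph{and} has the right parity/direction relative to the orientation to land in $\{k,k+1\}$ rather than straying to $0$ or $2k+1$. An alternative, and perhaps the intended one, is to run the whole lifting argument over again at the integer level: rather than lifting $f_0$, directly invoke Theorem~\ref{LTWZ} on the positive part $R$ (which is $6k$-edge connected by Claim~\ref{clm1}) with a carefully chosen $Z_{2k+1}$-boundary that already encodes the desired integer flow, and handle the negative edges $Q$ by choosing their source/sink labelling to absorb the leftover discrepancy, exactly as in the proof of Theorem~\ref{mod-o}.

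Concretely, the steps I expect are: (1) fix the minimum-negative-edge representative of the equivalence class, so $|Q|\ge 2k+1$ and $R$ is $6k$-edge connected; (2) choose an orientation/labelling of $Q$ and a $Z_{2k+1}$-boundary $\beta'$ on $R$ so that \emph{any} $(2k+1,k)$-flow-shaped correction on $R$ compatible with $\beta'$, glued to the chosen $Q$-part, yields $\partial f\equiv 0$ exactly (not just mod $2k+1$); (3) use Theorem~\ref{LTWZ} to get the $\beta'$-orientation $D$ of $R$, set the integer values of $f$ on $R$ to the appropriate elements of $\{k,k+1\}$ dictated by $D$, and verify that all vertex boundaries are now genuinely $0$ over $\bbbz$; (4) check the edge-value constraint $f(e)\in\{k,\dots,p-k\}=\{k,k+1\}$ holds on every edge, positive and negative; (5) conclude $g=f/k$ is a circular $(2+\tfrac1k)$-flow, hence $\Phi_c(G,\sigma)\le 2+\tfrac1k$, which also re-derives Theorem~\ref{main}.

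The main obstacle, I expect, is step (2)–(3): ensuring the \emph{exact} (integer, not modular) vanishing of every $\partial f(x)$. Over $\bbbz$ one must control not only residues mod $2k+1$ but the actual integer values of the out-minus-in degree differences, and in a bidirected graph the global sum $\sum_x\partial f(x)$ is governed by the source/sink counts among the negative edges — so the labelling of $Q$ must be chosen to make this global sum $0$ \emph{and} distribute correctly, which is precisely the role of the case split on $t\equiv|Q|\pmod 2$ seen in Theorem~\ref{mod-o}. Getting the bookkeeping so that the $\{k,k+1\}$-valued lift never drops an edge to $0$ (the failure mode flagged for cubic graphs in \cite{XuZhang05}) is the delicate point, and it is where the connectivity bound $12k-1$ (rather than merely $6k$) presumably has to be spent.
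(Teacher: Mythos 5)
Your proposal correctly identifies the crux --- a special $Z_{2k+1}$-flow only gives $\partial f(x)$ equal to an integer multiple of $2k+1$ at each vertex, and one must cancel these integer excesses without pushing edge values out of $\{k,k+1\}$ --- but it stops short of the two ideas that actually close the gap. First, the local move that works is not ``adding $\pm 1$ along a path'' (which, as you noticed, can drop an edge to $k-1$ or to $0$) but \emph{reversing the orientation} of every edge of a directed path $P$ in the orientation associated with the circulation on $R$ and replacing $g(e)$ by $2k+1-g(e)$; since $2k+1-k=k+1$ and $2k+1-(k+1)=k$, this keeps all values in $\{k,k+1\}$ while transferring one unit of $2k+1$ from a surplus vertex to a deficit vertex, so $||f+g||$ strictly decreases. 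Second --- and this is the idea your proposal is missing entirely --- for this augmenting process to terminate at $||f+g||=0$ one needs a certificate that a directed path from $V^+$ to $V^-$ always exists. The paper obtains this by first constructing a \emph{balanced} special $Z_{2k+1}$-circulation $f$ on the negative edges $Q$, meaning $\sum_x\partial f(x)=0$ exactly over the integers and $\Theta(X)=k|E_R[X,\bar X]|+\partial f(X)\ge k-2$ for \emph{every} vertex subset $X$ (Lemma~\ref{balanced}). If the set $Y$ of vertices from which $V^-$ is reachable misses $V^+$, then all edges of $E_D[Y,\bar Y]$ point out of $Y$ and $\sum_{v\in Y}\partial(f+g)(v)=\Theta(Y)\ge k-2$, contradicting $\sum_{v\in Y}\partial(f+g)(v)\le-(2k+1)$. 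Constructing such a balanced $f$ is the technical heart of the proof: one takes $3k$ edge-disjoint spanning trees of $R$ (available since $R$ is $6k$-edge connected by Claim~\ref{clm1}), forms the eulerian graph $Q\cup T_1\cup T_2$ minus a parity subgraph contained in $T_2$, labels the negative edges alternately source and sink along an eulerian circuit, and reserves the remaining $3k-2$ trees to pay for the cut condition $\Theta(X)\ge k-2$. This is where the hypothesis $12k-1$ is spent --- not, as you guessed, on routing many edge-disjoint correction paths.

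Your ``alternative'' route --- re-running Theorem~\ref{LTWZ} on $R$ with a cleverly chosen boundary $\beta'$ so that the glued flow is exact over the integers --- cannot work as stated: a $\beta$-orientation controls $d^+-d^-$ only modulo $2k+1$, so no choice of $\beta'$ can force the integer boundaries to vanish exactly. Some genuinely integer-level augmentation, together with a cut condition guaranteeing its termination, is unavoidable, and your steps (2)--(4) as written do not supply either ingredient.
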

\begin{proof}
Assume $(G, \sigma)$ is $(12k-1)$-edge connected and essentially $(2k+1)$-unbalanced.
We assume that $(G, \sigma)$ has the minimum number of negative edges among all signed graphs equivalent to $(G,\sigma)$.
Let $R$ and $Q$ be the subgraphs of $(G,\sigma)$ induced by the set of positive edges and by the set of negative edges, respectively.

We shall construct a $(2k+1, k)$-flow in $(G, \sigma)$. This is done in two steps.
In the first step, we construct a special $Z_{2k+1}$-circulation $f$
in the subgraph $Q$. In the second step, we construct a special $Z_{2k+1}$-circulation
$g$ in $R$, so that $f+g$ is a $(2k+1, k)$-flow. In taking the sum $f+g$,
we view $f$ (resp. $g$)  as a circulation in $(G,\sigma)$ with $f(e)=0$ for every positive edge $e$ (resp. $g(e)=0$ for every
negative edge $e$).

Given  a special $Z_{2k+1}$-circulation $f$ in $Q$.
For a subset $X$ of $V(G)$. Let $E_G[X,   \bar{X}]$
is the set of edges in $G$ with one end vertex in $X$ and the other in
$\bar{X}=V \setminus X$ (we write $E[X, \bar{X}]$ for short, if the graph $G$ is clear from the context).
For example, $E_R[X, \bar{X}] = E_G[X, \bar{X}] \cap R$.
Let
\begin{eqnarray*}
\partial f(X) &=& \sum_{v \in X} \partial f(v), \\
 \Theta(X) &=&  k |E_R(X, \bar{X}]| + \partial f(X).
 \end{eqnarray*}
We say the circulation $f$ is {\em balanced} if the following hold:
\begin{itemize}
\item $\sum_{x \in V(G)} \partial f(x) = 0$.
\item For any subset $X$ of $V$, $\Theta(X) \ge k-2.$
\end{itemize}

The special $Z_{2k+1}$-circulation $f$ in $Q$ we construct in the first step will be a balanced circulation.
Lemma \ref{balanced} below shows that such a circulation exists.

\begin{lemma}
\label{balanced}
There exists a balanced special $Z_{2k+1}$-circulation $f$ in $Q$.
\end{lemma}
\begin{proof}
By Claim \ref{clm1}, $R$ is $6k$-edge connected. By Nash-Williams' Theorem (cf. \cite{CQbook}),
$R$ contains $3k$ edge disjoint spanning trees, $T_1, T_2, \ldots, T_{3k}$.
Let $G'$ be the subgraph of $G$ induced by $Q \cup T_1 \cup T_2$
(here $G'$ is an ordinary graph, the signs on the edges are ignored).
It is well-known (cf. \cite{CQbook}) that any spanning tree of $G'$  contains a
{\em parity subgraph} $F$ of $G'$, i.e., for each vertex $x$, $$d_F(x) \equiv d_{G'}(x) \pmod{2}.$$
Let $F$ be a parity subgraph of $G'$ contained in $T_2$. Then $G'-F$ is connected
(as it contains a spanning tree $T_1$) and every vertex has an even degree, and hence has an eulerian cycle  $W$.
We orient the edges in $Q$ as follows: We start from an arbitrary vertex $v_0$,
follow  the eulerian cycle $W$,
label the  edges in $Q$  alternately  source edge and sink edge
(the positive edges in $W$ are ignored in this labeling process).
In other word, assume we traverse the eulerian cycle $W$, the negative edges encountered on the way
are $e_1, e_2, \ldots, e_q$. Then $e_{2i-1}$ are source edges and $e_{2i}$ are sink edges.
In particular, if $|Q|$ is even, then the number of sink edges is the same as the number of source edges.
If $|Q|$ is odd, then the number of sink edges is $1$ less than the number of source edges.
This completes the construction of the orientation $\tau$ of $Q$.

If $|Q|$ is even, then let $f(e)=k$ for every edge $e \in Q$. If $|Q|$ is odd, then $|Q| \ge 2k+1$. Let $T'$ be a
set of $k$ sink edges. Let $f(e)=k+1$ if $e \in T'$ and $f(e) = k$ if $e \in Q \setminus T'$.
This completes the construction of the special $Z_{2k+1}$-circulation $f$ in $Q$.

Now we prove that $f$ is a balanced special $Z_{2k+1}$-circulation.

It follows  from the construction that $\sum_{e \in S} f(e) = \sum_{e\in T} f(e)$.
As each sink (resp. source) edge contributes $2f(e)$ (resp. $-2f(e)$) to $\sum_{x \in V} \partial f(x)$, we have
$$\sum_{x \in V} \partial f(x) = 0.$$

We shall show that for any subset $X$ of $V$, $\Theta(X) \ge k-2$.
Starting from a vertex in $\bar{X}$, we traverse the eulerian cycle $W$.
Each time we enter $X$ and leave $X$, we traverse through a
segment of $W$ contained in $X$, and two edges in $E[X, \bar{X}]$.
Let $(e'_1, e'_2, \ldots, e'_b)$ be such a segment, with $e'_1, e'_b \in E[X, \bar{X}]$,
and $e'_i \in G[X]$ for $i = 2,3,\ldots, b-1$.
Now we calculate the contribution of these edges to $\Theta(X)$.
The following follows from the definition of $\Theta(X)$.
\begin{enumerate}
\item If $e'_i \in G[X]$ is a source (resp. sink) edge, then
$e'_i$ contributes $-2f(e'_i)=-2k$ (resp. $2f(e'_i))\ge 2k$) to $\Theta(X)$.
\item If $e'_i \in G[X]$ is a positive edge, then
$e'_i$ contributes $0$   to $\Theta(X)$.
\item If $i \in \{1, b\}$ and $e'_i$   is a  source (resp. sink) edge, then
$e'_i$ contributes $-k$ (resp. $f(e'_i)\ge k$) to $\Theta(X)$.
\item If $i \in \{1, b\}$ and $e'_i$ is a positive edge, then it contributes
$k$ to $\Theta(X)$.
\end{enumerate}
By our orientation, the negative edges in $W$ are alternately source edge and sink edge, except that in case $|Q|$ is odd,
there are two consecutive source edges (i.e., two source edges   not separated by a sink edge, but maybe separated by some
positive edges).

We claim that the contribution of the edges in this segment to $\Theta(X)$ is non-negative, except that when the segment contains two consecutive
source edges, the contribution of this segment is at least $-2k$.

For the proof of this claim, we need to consider a few cases according to whether $e'_1, e'_b$ are
positive edges, or one positive and the other  is a source edge, or one is positive and the other is a sink edge, etc.
However, each case is straightforward. We just consider two cases, and for simplicity, we assume that the segment
does not contain two consecutive source edges (if it does have two consecutive source edges, then we need to add $-2k$
to the total contribution).

If both $e'_1, e'_b$ are positive edges, then the number of source edges in this segment is
at most one more than the number of sink edges. Since each of $e'_1, e'_b$ contributes $k$ to
$\Theta(X)$, we conclude that the total contribution of this segment to $\Theta(X)$ is non-negative.

If both $e'_1$ and $e'_b$ are sink edges,  the number of  source edges in this segment is
one more than the number of sink edges. Since each of $e'_1, e'_b$ contributes at least $k$ to
$\Theta(X)$, we conclude that the total contribution of this segment to $\Theta(X)$ is non-negative.

Add up the contribution of   all the edges in the eulerian cycle $W$, we conclude that
the total contribution is at most $-2k$, where $-2k$ is contributed by the segment containing two consecutive
source edges.

Now each spanning tree $T_i$ for $i=3,4,\ldots, 3k$ contains at least one edge in
$E_R(X, \bar{X}]$, and hence contributes at least $k$ to
$\Theta(X)$. So $\Theta(X) \ge 3k-2 - 2k = k-2$.
This completes the proof of Lemma \ref{balanced}.
\end{proof}

Let $f$ be a balanced special $Z_{2k+1}$-circulation $f$ in $Q$.

\begin{claim}
There is a special $(2k+1)$-circulation $g$ of $R$ such that  $f+g$  is a special $Z_{2k+1}$-flow in $(G, \sigma)$.
\end{claim}
\begin{proof}
Let $\beta: V(G) \to Z_{2k+1}$ be defined as
$$\beta(x) \equiv 2 \partial f(x) \pmod{2k+1}.$$
Then $\sum_{x \in V(G)} \beta(x) \equiv 0 \pmod{2k+1}$.

Since $R$ is $6k$-edge connected,
by Theorem \ref{mod-o}, $R$ has a $\beta$-orientation $D$.
Let $g(e) = k$ for $e \in R$. Then for each vertex $x$,
\begin{eqnarray*}
\partial g(x) &\equiv& k\beta(x) \pmod{2k+1} \\
&\equiv &  2k \partial f(x) \pmod{2k+1} \\
&\equiv&  - \partial f(x) \pmod{2k+1}.
\end{eqnarray*}
Hence $\partial (f+g)(x) = \partial f(x)+ \partial g(x) \equiv 0 \pmod{2k+1}$ for each vertex $x$, i.e.,  $f+g$ is a
special $Z_{2k+1}$-flow in $(G, \sigma)$.
\end{proof}

For a $Z_{2k+1}$-flow $\phi$ in $(G, \sigma)$, let
$$||\phi||= \sum_{x \in V(G)} | \partial \phi (x)|.$$
Thus a special $Z_{2k+1}$-flow $\phi$ is a $(2k+1,k)$-flow if and only if $||\phi||=0$.

Among all the special $Z_{2k+1}$-flows $\phi$ in $(G, \sigma)$  of the form $f+g$, choose one  for which
$||(f+g)||$
is minimum. If $||(f+g)|| = 0$, then $f+g$ is a $(2k+1,k)$-flow in $(G, \sigma)$, and we are done.

Assume this is not the case.
Let $V^+=\{x:   \partial (f+g) (x)> 0\}$ and $V^-=\{x: \partial (f+g) (x) < 0\}$.
Since $\sum_{x \in V(G)}  \partial (f+g) (x) =0$, $V^+ \ne \emptyset$ and $V^- \ne \emptyset$.
Let $D$ be the orientation of $R$ associated with the circulation $g$.
We say a vertex $y$ of $G$ is {\em reachable} if there is a directed path in $D$ from $y$ to a vertex $x \in V^-$.
In particular, every vertex in $V^-$ is reachable.
Let $Y$ be the set of all reachable vertices.

Assume first that $Y \cap V^+ \ne \emptyset$. Let $P$ be a directed path in $D$ from $y \in V^+$ to $x \in V^-$.
We reverse the orientation of the edges in $P$, and let $g'(e) = 2k+1-g(e)$ for $e \in P$ and
$g'(e) = g(e)$ for $e \notin P$. Then $f+g'$ is a special $Z_{2k+1}$-flow in $(G,\sigma)$ with
\begin{equation*}
 \partial (f+g') (v)=\begin{cases}
 \partial (f+g) (v) - (2k+1),  &  \text{if $v = y$,}
\\
\partial(f+g) (v) + (2k+1),  & \text{if $v = x$,}
\\
\partial (f+g) (v),  & \text{otherwise.}
\end{cases}
\end{equation*}
As $f+g$ is a $Z_{2k+1}$-flow, $\partial (f+g) (x) < 0$ and $\partial (f+g) (y) > 0$  imply  that $\partial (f+g) (x) = -a(2k+1)$ for some positive integer $a$,
and $\partial (f+g) (y) = b(2k+1)$ for some positive integer $b$. Therefore $||(f+g')|| = ||(f+g)|| - 2(2k+1)$,
contrary to our choice of $g$.

Assume $Y \cap V^+ = \emptyset$. Since $X^- \subseteq Y$, we have $\sum_{v \in Y}\partial (f+g) (v) \le -(2k+1)$.
If there exist $y' \in \bar{Y}$ and $y \in Y$ such that $(y',y)$ is a directed edge of $D$, then  $y'$ would be a
reachable vertex, a contradiction.
Thus all edges in $E_D[Y, \bar{Y}]$ are oriented
from $Y$ to  $\bar{Y}$. Observe that
$\sum_{v \in Y}\partial (f+g) (v) = \sum_{v \in Y}\partial f (v) +\sum_{v \in Y}\partial g (v)$.
Since each edge in $E_D[Y, \bar{Y}]$ contribute $k$ to  $\sum_{v \in Y}\partial g (v)$, and every other edge
contributes $0$ to  $\sum_{v \in Y}\partial g (v)$, we conclude that
$\sum_{v \in Y}\partial (f+g) (v) = \Theta(Y)$. By Lemma \ref{balanced}, $\Theta(Y) \ge k-2$, contrary to the
conclusion that $\sum_{v \in Y}\partial (f+g) (v) \le -(2k+1)$.

This completes the proof of Theorem \ref{main1}.
 \end{proof}

\bigskip
Jaeger's $(2+\frac 1k)$-flow conjecture is sharp: there are $(4k-1)$-edge connected graphs $G$ for which $\Phi_c(G) > 2  +\frac 1k$.

Corresponding to Jaeger's conjecture, one naturally wonder what is the least integer $\psi(k)$ such that every essentially $(2k+1)$-unbalanced $\psi(k)$-edge connected signed graph $(G,\sigma)$ have
$\Phi_c(G, \sigma) \le 2 + \frac 1k$? The current known bounds are $4k \le \psi(k) \le 12k-1$. It would be interesting to narrow the gap between the upper and lower bounds.

\bibliographystyle{plain}

\end{document}